\newtheorem{thm}{Theorem}[section]
\newtheorem{cor}[thm]{Corollary}
\newtheorem{lem}[thm]{Lemma}
\theoremstyle{remark}
\newtheorem{remark}{Remark}[section]
\theoremstyle{remark}
\numberwithin{equation}{section}
\def\p{$\mathbf{P}^{\frac{n+p}{2}}(\mathbf{C})$}
\def\3{$\mathbf{P}^{\frac{3+p}{2}}(\mathbf{C})$}
\def\m{$M^n$}
\def\lw{\overline\nabla}
\def\l{\nabla}
\def\tr{{\rm trace}}
\begin{document}

\title{\bf On CR Submanifolds of Maximal CR Dimension with Flat Normal Connection of a Complex Projective Space}
\author{Liang Zhang$^\ast$, Man Su, Pan Zhang}

\address{School of Mathematics and Computer Science, Anhui Normal University\\
Anhui 241000, P.R. China
}
\email{zhliang43@163.com}

\address{School of Mathematics and Computer Science, Anhui Normal University\\
Anhui 241000, P.R. China
}
\email{1181293897@qq.com}

\address{School of Mathematical Sciences\\
University of Science and Technology of China\\
Anhui 230026, P.R. China\\
}

\email{panzhang@mail.ustc.edu.cn}

\thanks{{\scriptsize
\hskip -0.4 true cm
\newline \hskip -0.4 true cm \textit{2010 Mathematics Subject Classification.} 53C15; 53C40; 53B20
\newline \textit{Key words and phrases.} CR submanifolds; Maximal CR dimension; Flat normal connection; Complex projective space.
\newline $^\ast$Corresponding author.
}}

\maketitle

\begin{abstract}
 In this paper, we study the CR submanifolds of maximal CR dimension with flat normal connection of a complex projective space. We first investigate the position of the umbilical normal vector in the normal bundle, especially for the submanifolds of dimension 3. Then as the application, we prove the non-existence of a class of CR submanifolds of maximal CR dimension with flat normal connection.
\end{abstract}

\vskip 0.2 true cm

%------------------------------------------------------------------------------------%

\pagestyle{myheadings}
\markboth{\rightline {\scriptsize L. ZHANG, M. SU AND P. ZHANG}}
         {\leftline{\scriptsize On CR Submanifolds of Maximal CR Dimension with Flat Normal Connection }}

\bigskip
\bigskip

%------------------------------------------------------------------------------------%
%------------------------------------------------------------------------------------%

\section{\bf Introduction}
\vskip 0.4 true cm

Let $\mathbf{P}^{\frac{n+p}{2}}(\mathbf{C})$ be a $\frac{n+p}{2}$-dimensional complex projective space with constant holomorphic sectional curvature 4. Let $M^n$ be an $n$-dimensional real submanifold of $\mathbf{P}^{\frac{n+p}{2}}(\mathbf{C})$  and $J$ be the complex structure of $\mathbf{P}^{\frac{n+p}{2}}(\mathbf{C})$. For any point $x\in M^n$,
$$JT_x(M^n)\cap T_x(M^n)$$
is the maximal $J$-invariant subspace of the tangent space $T_x(M^n)$. We call it {\it the holomorphic tangent space} and denote it by $H_x(M^n)$. We also call the orthogonal complement of $H_x(M^n)$ {\it the totally real part of} $T_x(M^n)$ and denote it by $R_x(M^n)$. In general the dimension of $H_x(M^n)$ varies depending on the point $x\in M^n$. If $H_x(M^n)$ has constant dimension with respect to $x\in M^n$, then the submanifold $M^n$ is called {\it a CR submanifold} and the constant complex dimension is called {\it the CR dimension} of $M^n$. It is pointed out in \cite{MDMO1} that this notion of CR submanifolds is a generalization of the notion of CR submanifolds given by A.Bejancu in \cite{AB}. But there exists a special case in which the two notions coincide, that is $M^n$ has {\it maximal CR dimension}, i.e., the CR dimension of $M^n$ is $\frac{n-1}{2}$. In this case, there exists a unit normal vector $\xi_x$ such that
$$JT_x(M^n)\subset T_x(M^n)\oplus span\{\xi_x\}$$
for any point $x\in$\m.

A real hypersurface is a typical example of a CR submanifold of maximal CR dimension. The study of real hypersurfaces in complex space forms is a classical topic in differential geometry (see \cite{RT,MO,YM,MK,SMAR,RNPJR,BYCSM}) and the generalization of some results which are valid for real hypersurfaces to CR submanifolds of maximal CR dimension may be expected. For instance, some classification results of CR submanifolds of maximal CR dimension under some certain conditions were obtained in \cite{MDMO2,MDMO3,MDMO4,MDMO5,MD,THL}.

In this paper, we study CR submanifolds of maximal CR dimension with flat normal connection of a complex projective space. Note that the curvature tensor of the normal connection of a hypersurface vanishes automatically, so the normal connection of a hypersurface is naturally flat. We first discuss the position of the umbilical normal vector in the normal bundle and prove the following theorem.

\begin{thm}
Let \m be a CR submanifold of maximal CR dimension of \p, $n>2$. Let $\eta$ be an umbilical normal vector of $M^n$. If the normal connection is flat, then $\eta$ is in the direction of the distinguished normal vector $\xi$.
\end{thm}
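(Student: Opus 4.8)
The plan is to extract everything from the \emph{Ricci equation} of the immersion, in which the flatness of the normal connection and the umbilicity of $\eta$ will together force the normal component of $J\eta$ to vanish. First I set up the standard structure tensors. Writing $\overline{\nabla}$ and $\nabla$ for the Levi-Civita connections of the ambient space and of $M^n$, and denoting by $\xi$ the distinguished unit normal with $JT_x(M^n)\subset T_x(M^n)\oplus\mathrm{span}\{\xi\}$, every tangent $X$ admits the decomposition $JX=\phi X+u(X)\xi$, where $\phi$ is a skew-symmetric $(1,1)$-tensor and $u$ a $1$-form, together with $J\xi=-U$ for a unit tangent field $U$; the identities $\phi U=0$, $u(U)=1$, $u(X)=\langle X,U\rangle$ and $\phi^2X=-X+u(X)U$ all follow from $J^2=-I$. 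A short computation also shows that $J$ maps the normal subbundle $\{\xi\}^{\perp}\subset T^{\perp}M^n$ into itself, so $\langle J\eta,\zeta\rangle$ behaves well for normal arguments. I take the umbilicity of $\eta$ to mean $A_{\eta}=\rho\,\mathrm{Id}$ for the shape operator $A_{\eta}$ and some function $\rho$.

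The heart of the argument is to evaluate the ambient curvature on a pair of tangent and a pair of normal vectors. Using the curvature tensor of a complex space form of holomorphic sectional curvature $4$,
\begin{equation*}
\overline{R}(X,Y)Z=\langle Y,Z\rangle X-\langle X,Z\rangle Y+\langle JY,Z\rangle JX-\langle JX,Z\rangle JY-2\langle JX,Y\rangle JZ,
\end{equation*}
and inserting tangent $X,Y$ and normal $\eta,\zeta$, the first two terms drop out, the two middle terms cancel against each other (both equal $u(X)u(Y)\langle\xi,\eta\rangle\langle\xi,\zeta\rangle$ with opposite signs), and one is left with
\begin{equation*}
\langle\overline{R}(X,Y)\eta,\zeta\rangle=-2\,\langle\phi X,Y\rangle\,\langle J\eta,\zeta\rangle .
\end{equation*}
On the other hand the Ricci equation reads $\langle\overline{R}(X,Y)\eta,\zeta\rangle=\langle R^{\perp}(X,Y)\eta,\zeta\rangle-\langle[A_{\eta},A_{\zeta}]X,Y\rangle$. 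When the normal connection is flat we have $R^{\perp}=0$, and since $A_{\eta}=\rho\,\mathrm{Id}$ commutes with every $A_{\zeta}$ the bracket vanishes as well; hence the right-hand side is identically zero.

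Comparing the two expressions yields $\langle\phi X,Y\rangle\,\langle J\eta,\zeta\rangle=0$ for all tangent $X,Y$ and all normal $\zeta$. Because $n>2$, the horizontal distribution $\{U\}^{\perp}$ is nontrivial and $\phi$ restricted to it satisfies $\phi^2=-\mathrm{Id}$; choosing any nonzero horizontal $X$ and $Y=\phi X$ gives $\langle\phi X,Y\rangle=|\phi X|^{2}>0$. Therefore $\langle J\eta,\zeta\rangle=0$ for every normal $\zeta$, i.e. the normal component of $J\eta$ vanishes and $J\eta$ is tangent. Applying $J$ once more, since $J\eta$ is tangent we may write $-\eta=J(J\eta)=\phi(J\eta)+u(J\eta)\,\xi$; taking normal parts forces $\eta=-u(J\eta)\,\xi$, so $\eta$ points in the direction of $\xi$.

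The only delicate points I foresee are bookkeeping rather than conceptual: fixing the sign convention in the Ricci equation (which is immaterial here, as the commutator term vanishes outright) and verifying the cancellation of the two $\langle J\,\cdot\,,\,\cdot\,\rangle$ terms in the ambient curvature. The structural facts that $J$ preserves $\{\xi\}^{\perp}$ in the normal bundle and that the hypothesis $n>2$ guarantees $\phi\not\equiv 0$ are precisely what keep the final contraction non-vacuous; without $n>2$ one could not produce $X,Y$ with $\langle\phi X,Y\rangle\neq0$ and the conclusion would fail to follow.
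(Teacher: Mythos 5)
Your proof is correct and runs on the same engine as the paper's: the Ricci equation, with flatness killing $R^{\perp}$ and umbilicity killing $[A_{\eta},A_{\zeta}]$, so that the ambient curvature term $-2\langle FX,Y\rangle\langle J\eta,\zeta\rangle$ must vanish, and $\mathrm{rank}\,F=n-1>0$ (your $\phi$ is the paper's $F$) then forces the normal component of $J\eta$ to be zero. The paper packages this same computation in an adapted frame --- it pre-derives $[A,A_{1^*}]=0$ and $[A_1,A_{1^*}]=2F$ from the Ricci equations, writes $\eta=|\eta_1|\xi+|\eta_2|\xi_1$, and expands $0=[A_{\eta},A_{1^*}]$ to get $2|\eta_2|F=0$ --- so your invariant contraction over all normal $\zeta$, concluding that $J\eta$ is tangent and hence $\eta\parallel\xi$, is just the basis-free form of exactly the same argument.
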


It is proved in \cite{YTST} that there exists neither totally geodesic real hypersurfaces nor totally umbilical real hypersurfaces of a complex projective space. From Theorem 1.1, we can generalize this result to the following

\begin{cor}
  In \p ($n>2$) there exists neither totally geodesic CR submanifolds of maximal CR dimension nor totally umbilical CR submanifolds of maximal CR dimension, whose normal connections are flat.
\end{cor}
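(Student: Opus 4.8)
The plan is to derive the corollary from Theorem 1.1 together with the hypersurface non-existence result of \cite{YTST}, after first observing that both umbilicity hypotheses force \emph{every} normal vector to be umbilical. I would begin by recording the elementary fact that if \m is totally umbilical, then its second fundamental form satisfies $h(X,Y)=g(X,Y)\,H$ for the mean curvature vector $H$, so that for any normal vector $\eta$ the shape operator is $A_\eta=g(H,\eta)\,I$, a scalar multiple of the identity; hence every normal vector is an umbilical normal vector. The totally geodesic case is the degenerate instance $H=0$ (equivalently $h\equiv 0$, so $A_\eta=0=0\cdot I$) and is therefore subsumed, modulo checking that the statement of Theorem 1.1 indeed admits the umbilicity factor $\rho=0$.

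Next I would split the argument according to the codimension $p$. If $p\ge 2$, the normal space at each point has real dimension at least two, so I can choose a nonzero normal vector $\eta$ orthogonal to the distinguished normal vector $\xi$. By the previous step $\eta$ is umbilical, and since the normal connection is flat, Theorem 1.1 forces $\eta$ to lie in the direction of $\xi$. This contradicts $\eta\perp\xi$ with $\eta\neq 0$, so no such submanifold can exist when $p\ge 2$.

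It then remains to treat $p=1$, which is precisely the case Theorem 1.1 cannot settle on its own: there the normal bundle is spanned by $\xi$ alone, so the conclusion ``$\eta$ is parallel to $\xi$'' is automatic and produces no contradiction. Here I would instead note that $p=1$ means \m is a real hypersurface of \p (the ambient real dimension being $n+p$), and invoke \cite{YTST}, which asserts that a complex projective space admits neither totally geodesic nor totally umbilical real hypersurfaces. This disposes of the remaining case and completes the proof.

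I expect the only genuinely delicate point to be the clean separation of these two cases: essentially all of the geometric content is already packaged inside Theorem 1.1 and inside \cite{YTST}, so that the corollary reduces to a dimension count once one recognizes that total umbilicity (a fortiori total geodesy) renders the entire normal bundle umbilical. The step worth stating with care is the explanation of why the hypersurface case $p=1$ genuinely requires the external input of \cite{YTST} rather than Theorem 1.1.
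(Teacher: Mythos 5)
Your proposal is correct and takes essentially the same route the paper intends: the corollary is derived by combining Theorem 1.1 (which, since total umbilicity or total geodesy makes \emph{every} normal vector umbilical, rules out any codimension $p>1$ by producing a nonzero umbilical normal vector orthogonal to $\xi$) with the hypersurface non-existence result of \cite{YTST} for the remaining case $p=1$, where the normal connection is automatically flat. Your explicit case split, and the observation that the totally geodesic case is the $\zeta=0$ instance of total umbilicity (which Theorem 1.1 indeed covers, as its proof never uses $\lambda\neq 0$), simply spell out the argument the paper leaves implicit.
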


Next we consider the converse of Theorem 1.1. For 3-dimensional submanifolds, we prove the following theorem.

\begin{thm}
 Let $M^3$ be a 3-dimensional CR submanifold of maximal CR dimension of \3, $p>1$. If the normal connection is flat, then $p=3$ and the distinguished normal vector $\xi$ is umbilical.
\end{thm}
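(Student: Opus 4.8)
The plan is to convert the flatness of the normal connection into algebraic commutation relations via the Ricci equation, use those to force the codimension down to $p=3$, and then upgrade the resulting eigenvalue information about $A_\xi$ to genuine umbilicity through the Codazzi equation.

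First I would fix the local structure of $M^3$. Since the CR dimension is maximal, the totally real part of the tangent space is one-dimensional; write $U=-J\xi$ for its unit generator and decompose $JX=FX+u(X)\xi$ for tangent $X$, where $u(X)=\langle X,U\rangle$ and $F$ is the skew-symmetric $(1,1)$-tensor with $FU=0$ and $F^2=-\mathrm{Id}+u\otimes U$; thus $F$ has rank $2$ with kernel $\mathbb{R}U$ and acts on the holomorphic plane $H_x$ as a right-angle rotation. Applying $J$ to the normal bundle, $J\xi=-U$ is tangent, while for every unit normal $\eta\perp\xi$ the vector $J\eta$ is again normal and orthogonal to $\xi$; hence $J$ restricts to a complex structure on the orthogonal complement of $\xi$ in the normal bundle. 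Choosing an adapted orthonormal normal frame $\xi_0=\xi,\xi_1,\dots,\xi_{p-1}$ with $J\xi_{2i-1}=\xi_{2i}$, this forces $p-1$ to be even and records the skew matrix $P_{\alpha\beta}=\langle J\xi_\alpha,\xi_\beta\rangle$, which vanishes whenever an index is $0$ and is the standard complex structure on the remaining block.

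Next I would write out the Ricci equation and insert the curvature tensor of the complex space form of holomorphic sectional curvature $4$. Because $\langle JX,\xi_\alpha\rangle=u(X)\delta_{\alpha 0}$, the two totally real terms cancel and only the holomorphic term survives, giving $\langle\overline{R}(X,Y)\xi_\alpha,\xi_\beta\rangle=2\langle FX,Y\rangle P_{\alpha\beta}$. Setting the normal curvature to zero then yields the commutation relations
\[
[A_\alpha,A_\beta]=c\,P_{\alpha\beta}\,F,\qquad c=\pm 2,
\]
where $A_\alpha=A_{\xi_\alpha}$. In particular $A_\xi$ commutes with every $A_a$ for $a\ge 1$, and $[A_{2i-1},A_{2i}]=cF$ for each complex pair.

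To prove $p=3$ I would rule out a second complex pair. Suppose $p\ge 5$, so that $\xi_3,\xi_4$ exist with $[A_3,A_4]=cF$ while $A_3,A_4$ each commute with $A_1$ and $A_2$. By the Jacobi identity each of $A_3,A_4$ then commutes with $F=c^{-1}[A_1,A_2]$, hence with $F^2=-\mathrm{Id}+u\otimes U$, so each preserves $\mathbb{R}U$ and $H_x$; being symmetric and commuting with the right-angle rotation $F|_{H_x}$, each is a scalar on $H_x$. Thus $A_3$ and $A_4$ are simultaneously diagonal in an orthonormal basis $\{e_1,e_2,U\}$ with $e_1,e_2\in H_x$, and so commute, contradicting $[A_3,A_4]=cF\neq 0$ (here $c\neq 0$ and $F\neq 0$ since $\dim H_x=2$). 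Hence there is exactly one complex pair and $p=3$.

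Finally, for umbilicity of $\xi$, the identical diagonalization argument applied to $A_\xi$ (which commutes with $A_1,A_2$, hence with $F$) shows that $A_\xi$ is scalar on $H_x$, say equal to $\lambda$, with a possibly different eigenvalue $\mu$ in the direction $U$. To conclude $\lambda=\mu$ I would feed this form into the $\xi$-component of the Codazzi equation, using the auxiliary structure equations obtained from $\overline{\nabla}J=0$ (notably $A_2=FA_1-s_1\otimes U$, where $(s_1\otimes U)(X)=s_1(X)U$ is built from the normal-connection one-forms) to eliminate the derivative terms. The main obstacle is precisely this last step: the purely algebraic relations remain consistent with $\lambda\neq\mu$ in the degenerate case where $U$ is a common eigenvector of $A_1$ and $A_2$ (equivalently $h(U,\cdot)|_{H_x}=0$), so that case must be excluded by the differential Codazzi identity rather than by commutation alone. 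Carrying out that computation, showing $X\lambda=X\mu$ in all directions and hence $\lambda\equiv\mu$, is where the real work lies, after which $A_\xi=\lambda\,\mathrm{Id}$ and $\xi$ is umbilical.
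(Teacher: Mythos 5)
Your reduction to $p=3$ is correct and in fact takes a cleaner route than the paper's Lemma 3.2: where the paper combines its Lemma 3.1 with Theorem 1.1 to force a multiplicity-two eigenvalue on each $A_a$, you get everything from the Jacobi identity ($A_3,A_4$ commute with $[A_1,A_2]=cF$, hence with $F^2=-\mathrm{Id}+u\otimes U$, hence preserve $H_x$ and $\mathbf{R}U$), plus the observation that a symmetric operator on the plane $H_x$ commuting with the quarter-turn $F|_{H_x}$ must be scalar; then $A_3$ and $A_4$ would commute, contradicting $[A_3,A_4]=cF\neq 0$. That half stands, and it even avoids Theorem 1.1.

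The second half, however, has a genuine gap: you never prove that $\xi$ is umbilical. You correctly reduce to $A|_{H_x}=\lambda\,\mathrm{Id}$, $AU=\mu U$, but then you only announce a Codazzi computation "where the real work lies" without performing it, and you justify that detour by asserting that the pointwise algebraic relations are consistent with $\lambda\neq\mu$. That assertion is false, and closing this case is precisely what the paper does --- with no Codazzi equation anywhere in its proof of Theorem 1.3 (Codazzi enters only in Theorem 1.4). Concretely, in the paper's notation $\xi_{1^*}=J\xi_1$ (your $\xi_2$): if $\lambda\neq\mu$, your own commutation argument (the paper's Lemma 3.3) makes $U$ a common eigenvector of $A_1$ and $A_{1^*}$. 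Choosing $\xi_1$ along the component of the mean curvature vector orthogonal to $\xi$ gives $\tr A_{1^*}=0$, and the identities $\tr A_{a^*}=-s_a(U)$, $s_a(X)=-\langle A_{a^*}U,X\rangle$, $s_{a^*}(X)=\langle A_aU,X\rangle$ --- consequences of $\overline{\nabla}J=0$ and Weingarten, exactly the "auxiliary structure equations" you mention, not of Codazzi --- then force $A_{1^*}U=0$ and $s_1=s_{1^*}=0$ on $H_x$, so $A_{1^*}$ has eigenvalues $0,\alpha,-\alpha$, with $\alpha\neq 0$ because $A_{1^*}=0$ would make $\xi_{1^*}$ umbilical, violating Theorem 1.1. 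Feeding a unit $\alpha$-eigenvector $X\in H_x$ into $A_1X=-FA_{1^*}X+s_{1^*}(X)U$ yields $A_1X=-\alpha FX$ and $A_1FX=-\alpha X$, whence $[A_1,A_{1^*}]X=-2\alpha^{2}FX$; comparing with $[A_1,A_{1^*}]=2F$ gives $\alpha^{2}=-1$, a contradiction since $A_{1^*}$ is symmetric. So the degenerate case you flag is excluded by pure pointwise algebra, and until you either reproduce such an argument or actually carry out your proposed Codazzi computation, your proposal establishes only the codimension statement, not the umbilicity of $\xi$.
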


As the application of Theorem 1.1 and Theorem 1.3, we prove the non-existence of a class of CR submanifolds of maximal CR dimension of a complex projective space.

\begin{thm}
  In \3 $(p>1)$ there exist no 3-dimensional pseudo-umbilical CR submanifolds of maximal CR dimension with flat normal connection.
\end{thm}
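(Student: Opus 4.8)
The plan is to argue by contradiction. I would assume that $M^3$ is a pseudo-umbilical CR submanifold of maximal CR dimension of $\mathbf{P}^{\frac{3+p}{2}}(\mathbf{C})$ ($p>1$) with flat normal connection and derive an impossibility. Pseudo-umbilical means that the mean curvature vector $H$ is an umbilical normal section, $A_H=\lambda I$. By Theorem 1.3 the flatness of the normal connection already gives $p=3$ and makes the distinguished normal $\xi$ umbilical, $A_\xi=\rho I$. If $H\neq 0$, then $H$ is a nonzero umbilical normal vector, so by Theorem 1.1 it lies in the direction of $\xi$; since $\langle H,\eta\rangle=\frac13\tr A_\eta$, this forces $\tr A_1=\tr A_2=0$, where $A_1,A_2$ denote the shape operators of the remaining orthonormal normal frame $\{\xi_1,\xi_2\}$. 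If instead $H=0$ the submanifold is minimal and the same trace conditions hold directly. Hence in every case I may work with $A_\xi=\rho I$ and $\tr A_1=\tr A_2=0$.

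Since the orthogonal complement of $\xi$ in the normal bundle is $J$-invariant, I would fix $\xi_1,\xi_2$ with $J\xi_1=\xi_2$ and write the induced structure as $JX=\phi X+u(X)\xi$, $U=-J\xi$, $u(X)=\langle X,U\rangle$. The next step is the Ricci equation. Using the curvature tensor of the complex projective space one finds $\langle\overline{R}(X,Y)\xi_1,\xi_2\rangle=-2\langle\phi X,Y\rangle$, so with $R^{\perp}=0$ the Ricci equation yields the operator identity
\[
[A_1,A_2]=2\phi .
\]
In particular $A_1$ and $A_2$ fail to commute, $\phi$ being the rank-$2$ skew-symmetric operator that annihilates $U$.

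The core of the proof is to play this off against the Gauss–Weingarten relations in the Kähler ambient. Differentiating $J\xi_1=\xi_2$ and using $\overline\nabla J=0$, then separating tangential parts, I would obtain
\[
A_2=\phi A_1+t\otimes U ,
\]
where $t(X)=\langle\nabla^{\perp}_X\xi_1,\xi\rangle$ is the normal-connection one-form linking $\xi_1$ to $\xi$, with a parallel formula expressing $A_aU$ through $t$. I then evaluate the Ricci identity on the holomorphic plane spanned by $e_1$ and $e_2=\phi e_1$ (taking $U$ as the third frame vector). Substituting $A_2=\phi A_1+t\otimes U$ into $\langle[A_1,A_2]e_2,e_1\rangle$ and using $\tr A_1=0$, the expression collapses to a sum of squared norms,
\[
\langle[A_1,A_2]e_2,e_1\rangle=|A_1e_1|^2+|A_1e_2|^2\ge 0 ,
\]
while the identity $[A_1,A_2]=2\phi$ forces the same quantity to equal $2\langle\phi e_2,e_1\rangle=-2$. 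This contradiction shows that no such submanifold can exist.

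I expect the main obstacle to be the third paragraph: deriving the precise relation $A_2=\phi A_1+t\otimes U$ and the companion formula for $A_aU$, and then checking that once $\tr A_1=0$ is imposed the holomorphic-plane component of the commutator really does reduce to the manifestly nonnegative $|A_1e_1|^2+|A_1e_2|^2$. Keeping the sign conventions coherent between the curvature computation that pins down $[A_1,A_2]=2\phi$ and the differentiation of $J\xi_1=\xi_2$ that pins down the sign in $A_2=\phi A_1+\cdots$ is crucial, since it is exactly this matching that converts an innocuous identity into the impossible equation $-2=|A_1e_1|^2+|A_1e_2|^2$.
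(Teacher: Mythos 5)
Your proof is correct, and after the shared reduction step it takes a genuinely different route from the paper's. Your first paragraph (Theorem 1.3 gives $p=3$ and $A=|\zeta|\,{\rm id}$; pseudo-umbilicity plus Theorem 1.1 places the mean curvature vector along $\xi$, hence $\tr A_1=\tr A_{1^*}=0$) is exactly the paper's Lemma 4.1 together with (\ref{4traA10})--(\ref{4s1U1U0}). After that the paper works through Lemmas 4.2--4.4: it shows $A_1U$ is a nonzero holomorphic vector, invokes the Codazzi equation (\ref{equationCodazziA}) to normalize the frame $X=A_1U$, $Y=FX$, $U$ (getting $|X|=|Y|=1$, $s_{1^*}(X)=1$, $s_1(Y)=-1$, $|\zeta|$ constant), determines $A_1,A_{1^*}$ completely on that frame, and only then collides this with the flatness identity $[A_1,A_{1^*}]X=2FX$. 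You bypass all of this and never touch the Codazzi equation: you substitute the algebraic K\"ahler relation (\ref{Aa*}), $A_{1^*}X=FA_1X-s_1(X)U$ (your $A_2=\phi A_1+t\otimes U$ with $t=-s_1$), directly into $[A_1,A_{1^*}]=2F$ from (\ref{3AaAdab}). I checked the step you flagged as the main risk, and it closes with your signs: writing $e_2=Fe_1$ for a unit holomorphic $e_1$, and $A_1e_1=ae_1+be_2+cU$, $A_1e_2=be_1+de_2+eU$, $A_1U=ce_1+ee_2+fU$, relations (\ref{Aa*}) and (\ref{sa}) give $s_1(e_1)=e$, $s_1(e_2)=-c$, while (\ref{trAa*}) and (\ref{sa*}) give $\langle A_1U,U\rangle=s_{1^*}(U)=\tr A_1$, hence $a+d=0$; then
\begin{equation*}
\langle[A_1,A_{1^*}]e_2,e_1\rangle=2b^2+c^2+e^2-2ad=2a^2+2b^2+c^2+e^2=|A_1e_1|^2+|A_1e_2|^2\geq 0,
\end{equation*}
whereas $2\langle Fe_2,e_1\rangle=-2$, which is your contradiction. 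As for what each approach buys: the paper's longer route produces explicit structural information en route (the exact matrices of $A_1,A_{1^*}$ and the constancy of the mean curvature), while yours is shorter, purely pointwise, and in fact stronger than you claim. The only trace input your computation needs is $a+d=0$, i.e.\ the vanishing of the holomorphic trace of $A_1$, and by (\ref{trAa*}) and (\ref{sa*}) this holds automatically for every CR submanifold of maximal CR dimension, with no pseudo-umbilicity at all. So your argument actually proves that \3 with $p>1$ admits no 3-dimensional CR submanifolds of maximal CR dimension with flat normal connection whatsoever; the pseudo-umbilical hypothesis of Theorem 1.4 is superfluous, and the class of submanifolds treated in Theorem 1.3 is in fact empty.
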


\begin{cor}
  In \3 $(p>1)$ there exist no 3-dimensional minimal CR submanifolds of maximal CR dimension with flat normal connection.
\end{cor}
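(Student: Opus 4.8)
The plan is to derive Corollary 1.5 directly from Theorem 1.4 by recognizing that minimal submanifolds constitute a special subclass of the pseudo-umbilical ones. Recall that $M^3$ is said to be \emph{pseudo-umbilical} when the shape operator $A_H$ determined by the mean curvature vector $H$ is a multiple of the identity, $A_H = \lambda\,I$ for some smooth function $\lambda$, and that $M^3$ is \emph{minimal} precisely when $H$ vanishes identically.

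The key observation I would make is that minimality forces the pseudo-umbilical condition in its most degenerate form: if $H \equiv 0$, then $A_H = A_0 = 0 = 0\cdot I$, so the mean curvature vector is automatically an umbilical normal direction, with $\lambda = 0$. Hence every $3$-dimensional minimal CR submanifold of maximal CR dimension of \3 with flat normal connection is, in particular, a pseudo-umbilical one. Applying Theorem 1.4, which rules out pseudo-umbilical examples in \3 for $p>1$, I would conclude that no minimal examples can exist either, since they form a subclass of the excluded family.

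I do not expect any genuine obstacle here, as all the analytic content has already been absorbed into Theorem 1.4 (and, through it, into Theorems 1.1 and 1.3). The only point deserving care is the purely conventional one of reading the definition of pseudo-umbilical so that the value $\lambda = 0$ is permitted; under the standard convention this holds, and the minimal case is genuinely subsumed. Should one instead prefer a definition excluding $H \equiv 0$, the same conclusion would follow by rerunning the argument behind Theorem 1.4 with $H = 0$: minimality combined with the umbilicity of $\xi$ supplied by Theorem 1.3 (so that $A_\xi = \alpha\,I$) forces $\tr A_\xi = 3\alpha = 0$, whence $A_\xi = 0$, returning the situation to the framework in which the contradiction of Theorem 1.4 has already been established.
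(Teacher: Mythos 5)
Your proposal is correct and matches the paper's own route: the paper derives Corollary 1.5 from Theorem 1.4 via the observation (stated in Section 3, citing Chen) that minimal submanifolds are automatically pseudo-umbilical, since $\zeta=0$ gives $A_{\zeta}=0=\langle\zeta,\zeta\rangle\, id$ trivially. Your extra care about the convention $\lambda=0$ is exactly how the paper's definition ($\lambda=\langle\eta,\zeta\rangle$) handles it, so nothing further is needed.
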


\begin{remark}
We should note that for some other ambient spaces there may exist pseudo-umbilical submanifolds with flat normal connection. For instance, from results of \cite{BYC1} we know that minimal surfaces of a hypersurface of a Euclidean space $\mathbf{E}^m$ and the product of two plane circles in $\mathbf{E}^4$ are both pseudo-umbilical with flat normal connection.
\end{remark}

%------------------------------------------------------------------------------------%
\section{\bf Preliminaries}
\vskip 0.4 true cm

Let $M^n$ be a CR submanifold of maximal CR dimension of $\mathbf{P}^{\frac{n+p}{2}}(\mathbf{C})$. For each point $x\in M^n$, the real dimension of the holomorphic tangent space $H_x(M^n)$ is $n-1$. Therefore $M^n$ is necessarily odd-dimensional and there exists a unit normal vector $\xi_x$ such that
$$JT_x(M^n)\subset T_x(M^n)\oplus span\{\xi_x\}.$$
Write
\begin{equation}\label{defU}
   U_x=-J\xi_x.
\end{equation}
It is easy to see that $U_x$ is a unit tangent vector of $M^n$ which spans the totally real tangent space $R_x(M^n)$. So a tangent vector $Z_x$ of $M^n$ is a holomorphic tangent vector, i.e., $Z_x\in H_x(M^n)$ if and only if $Z_x$ is orthogonal to $U_x$. For any $X\in TM^n$, we may write
\begin{equation}\label{defFu}
  JX=FX+u(X)\xi,
\end{equation}
where $F$ is a skew-symmetric endomorphism acting on $TM^n$, $u$ is the one form dual to $U$. It is proved in \cite{MDMO1} that
\begin{equation}\label{F2}
 F^2X=-X+u(X)U,
\end{equation}
\begin{equation}\label{FU}
 u(FX)=0,\ FU=0,
\end{equation}
which imply $M^n$ has an almost contact structure.

Let $T^{\bot}_1(M^n)$ be the subbundle of the normal bundle $T^{\bot}(M^n)$ defined by
$$
  T^{\bot}_1(M^n)=\{\eta\in T^{\bot}(M^n)|\langle \eta,\xi\rangle=0\},
$$
where $\langle,\rangle$ is the inner product of the tangent space of \p. Since $T^{\bot}_1(M^n)$ is $J$-invariant, we can choose a local orthonormal basis of $T^{\bot}(M^n)$ in the following way:
\begin{equation}\label{nframe}
    \xi,\ \xi_1,\cdots,\xi_q,\ \xi_{1^*},\cdots,\xi_{q^*},
\end{equation}
where $\xi_{a^*}=J\xi_a,\ a=1,\cdots,q$ and $q=\frac{p-1}{2}$.

Let $A, A_a, A_{a^*}$ denote the shape operators for the normals $\xi, \xi_a, \xi_{a^*}$, respectively. Write
$$
   D\xi=\sum_a(s_a\xi_a+s_{a^*}\xi_{a^*}),
$$
$$
   D\xi_a=-s_a\xi+\sum_b(s_{ab}\xi_b+s_{ab^*}\xi_{b^*}),
$$
$$
   D\xi_{a^*}=-s_{a^*}\xi+\sum_b(s_{a^*b}\xi_b+s_{a^*b^*}\xi_{b^*}),
$$
where $s$'s are the coefficients of the normal connection $D$. Let $\lw$ be the connection of \p. By using the classical Weingarten formula and noting that $\lw J=0$, one can obtain the following relations (\cite{MDMO1}):
\begin{equation}\label{Aa*}
    A_{a^*}X=FA_aX-s_a(X)U,
\end{equation}
\begin{equation}\label{Aa}
    A_{a}X=-FA_{a^*}X+s_{a^*}(X)U,
\end{equation}
\begin{equation}\label{trAa*}
    \tr A_{a^*}=-s_a(U),\ \tr A_a=s_{a^*}(U),
\end{equation}
\begin{equation}\label{sa*}
    s_{a^*}(X)=\langle A_aU,X\rangle,
\end{equation}
\begin{equation}\label{sa}
    s_{a}(X)=-\langle A_{a^*}U,X\rangle,
\end{equation}
\begin{equation}\label{sab}
    s_{a^*b^*}=s_{ab},\ s_{a^*b}=-s_{ab^*},
\end{equation}
\begin{equation}\label{gbU}
    \l_XU=FAX,
\end{equation}
where $X,Y$ are tangent to $M^n$, $\l$ is the connection induced from $\lw$, and $a,b=1,\cdots,q$.

To prove our theorems, we need to write the classical equations of Codazzi and Ricci for submanifolds. For the sake of convenience, set $\xi_0=\xi$ and $\alpha,\beta=0,1,\cdots,q,1^*,\cdots,q^*$. Recall the equation of Codazzi for the normal vector $\xi$ is given by \cite{MDMO1}
\begin{align}\label{CodazziA}
  (\nabla_XA)Y-(\nabla_YA)X =&-(\overline{R}(X,Y)\xi)^{\top}+\sum_b\{s_b(X)A_bY-s_b(Y)A_bX\}\notag\\
  & +\sum_b\{s_{b^*}(X)A_{b^*}Y-s_{b^*}(Y)A_{b^*}X\},
\end{align}
where $\overline{R}$ is the Riemannian curvature tensor of $\mathbf{P}^{\frac{n+p}{2}}(\mathbf{C})$, $X,Y$ are tangent to $M^n$, $(\overline{R}(X,Y)\xi)^{\top}$ is the tangent part of $\overline{R}(X,Y)\xi$, and $(\l_XA)Y$ is defined as
\begin{equation}\label{deflA}
    (\l_XA)Y=\l_XAY-A(\l_XY).
\end{equation}
Recall that the equation of Ricci is given by \cite{MDMO1}
\begin{equation}\label{eqRicci}
    \langle R^{\bot}(X,Y)\xi_{\alpha},\xi_{\beta}\rangle=\langle \overline{R}(X,Y)\xi_{\alpha},\xi_{\beta}\rangle+\langle [A_{\alpha},A_{\beta}]X,Y\rangle,
\end{equation}
where $R^{\bot}$ is the curvature tensor of the normal connection, and
\begin{equation*}
    [A_{\alpha},A_{\beta}]=A_{\alpha}\circ A_{\beta}-A_{\beta}\circ A_{\alpha}.
\end{equation*}

Note that the Riemannian curvature tensor $\overline{R}$ of \p is given by
\begin{equation}\label{curvatureP}
\overline{R}(\overline{X},\overline{Y})\overline{Z}=  \langle\overline{Y},\overline{Z}\rangle\overline{X}
-\langle\overline{X},\overline{Z}\rangle\overline{Y}+\langle J\overline{Y},\overline{Z}\rangle J\overline{X}
-\langle J\overline{X},\overline{Z}\rangle J\overline{Y}+2\langle \overline{X},J\overline{Y}\rangle J\overline{Z},
\end{equation}
where $\overline{X},\overline{Y},\overline{Z}$ are tangent to \p. From (\ref{curvatureP}),(\ref{defU}),(\ref{defFu}), we calculate
\begin{equation*}
  \overline{R}(X,Y)\xi=u(Y)FX-u(X)FY+2\langle FX,Y\rangle U,
\end{equation*}
\begin{equation*}
  \overline{R}(X,Y)\xi_a=-2\langle FX,Y\rangle\xi_{a^*},
\end{equation*}
\begin{equation*}
  \overline{R}(X,Y)\xi_{a^*}=2\langle FX,Y\rangle\xi_{a}.
\end{equation*}
Therefore the equation of Codazzi (\ref{CodazziA}) becomes \cite{MDMO1}
\begin{align}\label{equationCodazziA}
    (\nabla_XA)Y-(\nabla_YA)X = & u(X)FY-u(Y)FX-2\langle FX,Y\rangle U \notag\\
    & +\sum_b\{s_b(X)A_bY-s_b(Y)A_bX\}\notag\\
   &+\sum_b\{s_{b^*}(X)A_{b^*}Y-s_{b^*}(Y)A_{b^*}X\}.
\end{align}
The equation of Ricci (\ref{eqRicci}) becomes
\begin{equation}\label{eqRicciAAa}
    \langle R^{\bot}(X,Y)\xi,\xi_{a}\rangle=\langle [A,A_{a}]X,Y\rangle,
\end{equation}
\begin{equation}\label{eqRicciAAa*}
    \langle R^{\bot}(X,Y)\xi,\xi_{a^*}\rangle=\langle [A,A_{a^*}]X,Y\rangle,
\end{equation}
\begin{equation}\label{eqRicciAaAb}
    \langle R^{\bot}(X,Y)\xi_a,\xi_b\rangle=\langle [A_a,A_b]X,Y\rangle,
\end{equation}
\begin{equation}\label{eqRicciAaAb*}
    \langle R^{\bot}(X,Y)\xi_a,\xi_{b^*}\rangle=-2\langle FX,Y\rangle\delta_{ab}+\langle [A_a,A_{b^*}]X,Y\rangle,
\end{equation}
\begin{equation}\label{eqRicciAa*Ab*}
    \langle R^{\bot}(X,Y)\xi_{a^*},\xi_{b^*}\rangle=\langle [A_{a^*},A_{b^*}]X,Y\rangle.
\end{equation}
%------------------------------------------------------------------------------------%

\section{\bf The Position of the Umbilical Normal Vector in the Normal Bundle}
\vskip 0.4 true cm

Let $M^n$ be a CR submanifold of maximal CR dimension of $\mathbf{P}^{\frac{n+p}{2}}(\mathbf{C})$. The normal connection $D$ is said to be {\it flat}, if the curvature tensor $R^{\bot}$ of $D$ vanishes. In this section we discuss the position of the umbilical normal vector in the normal bundle for this kind of submanifolds. Recall that a normal vector $\eta$ is said to be {\it umbilical}, if the shape operator with respect to $\eta$ is given by
\begin{equation}\label{3AelxMn}
  A_{\eta}=\lambda id: T_x(M^n)\to T_x(M^n),
\end{equation}
where $\lambda=\langle\eta,\zeta\rangle$, $\zeta$ is the mean curvature vector, and $id:T_x(M^n)\to T_x(M^n)$ is the identity map. Specially, if $\zeta$ is umbilical, then the submanifold $M^n$ is called {\it pseudo-umbilical}. It is obvious that minimal submanifolds must be pseudo-umbilical (see \cite{BYC2}).

From equations of Ricci (\ref{eqRicciAAa})-(\ref{eqRicciAa*Ab*}), we see that flat normal connection implies that
\begin{equation}\label{3AAaAa0}
    [A,A_a]=0,\ [A,A_{a^*}]=0,
\end{equation}
\begin{equation}\label{3AaAAb0}
    [A_a,A_b]=0,\ [A_{a^*},A_{b^*}]=0,
\end{equation}
\begin{equation}\label{3AaAdab}
   [A_{a},A_{b^*}]=2\delta_{ab}F,
\end{equation}
where $a,b=1\cdots q$.

\begin{proof}[Proof of Theorem 1.1]
  The result trivially holds when $p=1$. In the following, we assume $p>1$. For the umbilical normal vector $\eta$, we decompose it as $\eta=\eta_1+\eta_2$, where $\eta_1\in span\{\xi\}, \eta_2\bot\xi$. Choose the unit normal vector $\xi_1$ such that $\eta_2=|\eta_2|\xi_1$, then
  \begin{equation*}
    \eta=|\eta_1|\xi+|\eta_2|\xi_1.
  \end{equation*}
  From the definition of the umbilicity of $\eta$ (see (\ref{3AelxMn})), we deduce that
  \begin{align*}
    0 & =[A_{\eta},A_{1^*}]=[|\eta_1|A+|\eta_2|A_1,A_{1^*}]\notag\\
    & =|\eta_1|[A,A_{1^*}]+|\eta_2|[A_1,A_{1^*}].
  \end{align*}
  Substituting (\ref{3AAaAa0}) and (\ref{3AaAdab}) into the above formula, we get
  \begin{equation*}
    2|\eta_2|F=0.
  \end{equation*}
  Since $n>2$ and $rank F=n-1$, we conclude that $|\eta_2|=0$. Therefore $\eta=|\eta|\xi$.
\end{proof}

To prove Theorem 1.3, we need the following lemmas. The first one is an easy linear algebra result which can be obtained by direct calculations.

\begin{lem}
  Let $(V,\langle,\rangle)$ be an $n$-dimensional inner product space and $f:V\to V$ be a linear transformation. Suppose there exist $\lambda\in \mathbf{R}$ and $X\in V$ such that $f(X)=\lambda X$. If the linear transformations $f_1,f_2:V\to V$ are both commutative with $f$, then we have
  \begin{equation*}
    f(f_1X)=\lambda f_1X,\ f(f_2X)=\lambda f_2X,\ f([f_1,f_2]X)=\lambda [f_1,f_2]X.
  \end{equation*}
\end{lem}

%\begin{proof}
%  Suppose $X\in V, fX=\lambda X$. It follows from $f\circ f_1=f_1\circ f$ that
%  \begin{equation*}
%    f(f_1X)=f_1(fX)=\lambda f_1X,
%  \end{equation*}
%  similarly,
%  \begin{equation*}
%    f(f_2X)=\lambda f_2X.
%  \end{equation*}
%  Then we also have
%  \begin{equation*}
%    f(f_1\circ f_2X)=\lambda f_1(f_2X),\ f(f_2\circ f_1X)=\lambda f_2(f_1X),
%  \end{equation*}
%  hence
%  \begin{equation*}
%    f([f_1,f_2]X)=\lambda [f_1,f_2]X.
%  \end{equation*}
%\end{proof}

\begin{lem}
  Let $M^3$ be a 3-dimensional CR submanifold of maximal CR dimension of \3, $p>1$. If the normal connection is flat, then $p=3$.
\end{lem}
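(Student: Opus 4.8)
The plan is to argue by contradiction: assume $q\ge 2$ (equivalently $p\ge 5$) and derive a contradiction from the commutation relations (\ref{3AAaAa0})--(\ref{3AaAdab}) forced by flatness of the normal connection. I work at a fixed point $x$ on the $3$-dimensional space $T_x(M^3)$, where $F$ is skew-symmetric with $\ker F=\mathrm{span}\{U\}$ and, by (\ref{F2}) and (\ref{FU}), $F^2=-\mathrm{id}$ on the holomorphic tangent space $H_x(M^3)=U^{\perp}$, so that $F|_{H_x}$ is nondegenerate and $\mathrm{rank}\,F=2$.

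First I would locate a non-scalar shape operator in the first conjugate pair. Since $[A_1,A_{1^*}]=2F\neq 0$ by (\ref{3AaAdab}) and two scalar operators commute, at least one of $A_1,A_{1^*}$ is not a scalar multiple of the identity; relabelling within the pair, I may assume it is $A_1$. The essential point is that the \emph{second} pair is available because $q\ge 2$: by (\ref{3AaAAb0}) and (\ref{3AaAdab}) both $A_2$ and $A_{2^*}$ commute with $A_1$, while $[A_2,A_{2^*}]=2F$. Applying Lemma 3.1 with $f=A_1$, $f_1=A_2$, $f_2=A_{2^*}$ then shows that $2F=[A_2,A_{2^*}]$ carries each eigenvector, hence each eigenspace, of $A_1$ into itself; that is, $F$ preserves the eigenspaces of $A_1$.

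Next I would pin down the eigenstructure of $A_1$. Decomposing $T_x(M^3)$ orthogonally into eigenspaces of the symmetric operator $A_1$, the restriction of the skew-symmetric $F$ to each eigenspace is again skew-symmetric, hence of even rank, and these ranks sum to $\mathrm{rank}\,F=2$. As $A_1$ is not scalar it has no $3$-dimensional eigenspace, so the only possibility is a $2$-dimensional eigenspace $W$ carrying all of the rank of $F$ together with a $1$-dimensional eigenspace $L$ on which $F$ vanishes. Then $L\subset\ker F=\mathrm{span}\{U\}$ forces $L=\mathrm{span}\{U\}$ and $W=U^{\perp}=H_x(M^3)$; in particular $U$ is an eigenvector of $A_1$ and $A_1$ acts as a scalar $\mu$ on $H_x(M^3)$, say $A_1U=\lambda U$. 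Feeding this back into $[A_1,A_{1^*}]=2F$ restricted to $H_x$, for $X\in H_x(M^3)$ one computes $[A_1,A_{1^*}]X=(A_1-\mu\,\mathrm{id})A_{1^*}X$, and writing $A_{1^*}X=h+cU$ with $h\in H_x(M^3)$ and using $A_1h=\mu h$ gives $[A_1,A_{1^*}]X=c(\lambda-\mu)U\in\mathrm{span}\{U\}$. But this same expression equals $2FX\in H_x(M^3)$, which is nonzero for $X\neq 0$ since $F|_{H_x}$ is nondegenerate; as $H_x(M^3)\cap\mathrm{span}\{U\}=\{0\}$ this is a contradiction. Hence $q\le 1$, and since $p>1$ gives $q=\tfrac{p-1}{2}\ge 1$, we conclude $q=1$, i.e. $p=3$.

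I expect the main obstacle to be organising the degenerate configurations cleanly rather than any hard computation; in particular the argument must not presume that the distinguished shape operator $A$, or any individual $A_a$, is non-scalar. This is exactly what the presence of a second conjugate pair resolves: it lets Lemma 3.1 be applied to whichever member of the first pair is non-scalar, using the commuting pair $(A_2,A_{2^*})$ to produce $F$, so that the umbilical and other degenerate cases are absorbed uniformly. The symmetric situation in which $A_{1^*}$ rather than $A_1$ is the non-scalar operator is handled by the identical computation with the roles of $A_1$ and $A_{1^*}$ interchanged, noting that both $A_1$ and $A_{1^*}$ commute with both $A_2$ and $A_{2^*}$ by (\ref{3AaAAb0}) and (\ref{3AaAdab}).
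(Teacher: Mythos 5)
Your proof is correct, and it runs on the same engine as the paper's: assuming $q\ge 2$, apply Lemma 3.1 with $f=A_1$, $f_1=A_2$, $f_2=A_{2^*}$ to conclude that $F=\tfrac{1}{2}[A_2,A_{2^*}]$ preserves the eigenspaces of $A_1$, then play the resulting eigenstructure off against $[A_1,A_{1^*}]=2F$. The differences lie in the setup and the finish, and they are worth noting. The paper invokes Theorem 1.1 twice: first to rule out $A_1$ being scalar (an umbilical normal vector orthogonal to $\xi$ is impossible), and again to get the same two-eigenvalue structure for $A_{1^*}$; its final contradiction needs both operators to act as scalars on the holomorphic tangent space, whence $[A_1,A_{1^*}]X=0\neq 2FX$. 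You avoid Theorem 1.1 entirely: non-scalarity of at least one member of the pair comes for free from $[A_1,A_{1^*}]=2F\neq 0$ (with the symmetric case handled by swapping roles), the eigenspace pattern $T_x(M^3)=W\oplus L$ with $L=\mathrm{span}\{U\}$ and $W=H_x(M^3)$ comes from the even-rank count for the skew-symmetric restrictions of $F$ (equivalent to the paper's observation that $X$ and $FX$ are independent eigenvectors), and your final step needs the structure of only one operator, since $[A_1,A_{1^*}]X=(A_1-\mu\,\mathrm{id})A_{1^*}X\in\mathrm{span}\{U\}$ can never equal $2FX\in H_x(M^3)\setminus\{0\}$. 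What this buys is a leaner, self-contained lemma that is logically independent of Theorem 1.1; what the paper's version buys is an explicit description of both $A_1$ and $A_{1^*}$ (simple eigenvalue on $U$, double eigenvalue on the holomorphic tangent space), which mirrors the structure it reuses later in Lemma 3.3.
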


\begin{proof}
  Otherwise\  $p>3$, then we may choose orthonormal frame
  \begin{equation*}
  \xi,\ \xi_1,\ \xi_2,\cdots,\xi_q,\ \xi_{1^*},\ \xi_{2^*},\cdots,\xi_{q^*}
  \end{equation*}
  of $T^{\bot}(M^3)$. In the following we consider the eigenvalues and eigenvectors of the shape operator $A_1$. We prove first that if there exists an eigenvalue of $A_1$, say $\alpha$, such that $U$ is not the eigenvector corresponding to $\alpha$, then the multiplicity of $\alpha$ is 2. In fact, since the normal connection is flat, from (\ref{3AaAAb0}) and (\ref{3AaAdab}), we have
  \begin{equation*}
    [A_1,A_2]=0,\ [A_1,A_{2^*}]=0.
  \end{equation*}
  According to Lemma 3.1, if $X$ is an eigenvector corresponding to $\alpha$, then
  \begin{equation*}
    A_1([A_2,A_{2^*}]X)=\alpha [A_2,A_{2^*}]X.
  \end{equation*}
  Noting that $[A_2,A_{2^*}]=2F$, the above formula becomes
  \begin{equation*}
    A_1(FX)=\alpha FX.
  \end{equation*}
  It is easy to see that if $X\not\in span\{U\}$, then $X$ and $FX$ are linearly independent. Hence the above formula implies the multiplicity of $\alpha$ is at least 2. This combined with Theorem 1.1 shows that the multiplicity of $\alpha$ is 2.

  Next we prove $A_1$ has two distinct eigenvalues, and $U$ is the eigenvector corresponding to the simple one, while all the holomorphic tangent vectors are eigenvectors corresponding to the other one whose multiplicity is 2. In fact, Theorem 1.1 guarantees that $A_1$ has at least two distinct eigenvalues, say $\alpha$ and $\beta$. From the declaration above, we know that $U$ is an eigenvector corresponding to $\alpha$ or $\beta$, say $\beta$ (otherwise dim$M^3\geqq 4$). Then the eigenvectors of $\alpha$ are orthonormal to $U$. Also from the declaration above, we see that $\alpha$ has multiplicity 2.

  In entirely the same way we can prove that $A_{1^*}$ also has two distinct eigenvalues, and $U$ is an eigenvector corresponding to the simple one, while all the holomorphic tangent vectors are eigenvectors corresponding to the other one whose multiplicity is 2.

  Take a holomorphic tangent vector $X\not=0$. Assume that
  \begin{equation*}
    A_1X=\alpha X,\ A_{1^*}X=\alpha^*X.
  \end{equation*}
  By a direct calculation, we have
  \begin{equation*}
    [A_1,A_{1^*}]X=0.
  \end{equation*}
  On the other hand, (\ref{3AaAdab}) implies that $[A_1,A_{1^*}]X=2FX\not=0$. This contradiction shows that $p=3$.
\end{proof}

\begin{lem}
  Let $M^3$ be a 3-dimensional CR submanifold of maximal CR dimension of \3, $p>1$. If the normal connection is flat, then either the distinguished normal vector $\xi$ is umbilical, or the shape operator $A$ has two distinct eigenvalues. In the latter case, $U$ is an eigenvector corresponding to the simple eigenvalue, while all the holomorphic tangent vectors are eigenvectors corresponding to the eigenvalue with multiplicity 2. In this case, $U$ is also the eigenvector of $A_1$ and $A_{1^*}$.
\end{lem}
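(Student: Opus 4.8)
The plan is to determine the complete eigenstructure of the symmetric shape operator $A$ on the $3$-dimensional tangent space directly from the flatness relations, and then to carry the information about $U$ over to $A_1$ and $A_{1^*}$ via commutation. By Lemma 3.2 we may assume $p=3$, so $q=1$ and the normal frame is $\xi,\xi_1,\xi_{1^*}$; in this case the relations (\ref{3AAaAa0}) and (\ref{3AaAdab}) become $[A,A_1]=0$, $[A,A_{1^*}]=0$ and $[A_1,A_{1^*}]=2F$. The driving tool is Lemma 3.1 applied with $f=A$, $f_1=A_1$, $f_2=A_{1^*}$, together with the identity (\ref{F2}).

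First I would record the key claim: if $\alpha$ is an eigenvalue of $A$ possessing an eigenvector $X\notin span\{U\}$, then the multiplicity of $\alpha$ is at least $2$. To see this, note that $A$ commutes with both $A_1$ and $A_{1^*}$ and $AX=\alpha X$, so Lemma 3.1 yields $A([A_1,A_{1^*}]X)=\alpha[A_1,A_{1^*}]X$; since $[A_1,A_{1^*}]=2F$ this reads $A(FX)=\alpha FX$. As in the proof of Lemma 3.2, the identity $F^2X=-X+u(X)U$ from (\ref{F2}) forces $X$ and $FX$ to be linearly independent whenever $X\notin span\{U\}$, so the $\alpha$-eigenspace contains the independent pair $\{X,FX\}$ and is therefore at least $2$-dimensional.

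Next, assuming $\xi$ is not umbilical, $A$ has at least two distinct eigenvalues, and I would use the claim to rule out three. If there were three distinct eigenvalues they would all be simple with mutually orthogonal eigenvectors, and since at most one such eigenvector can lie in the line $span\{U\}$, some simple eigenvalue would have an eigenvector off $span\{U\}$ — contradicting the claim. Hence $A$ has exactly two distinct eigenvalues, necessarily of multiplicities $2$ and $1$. The simple eigenvalue must have its $1$-dimensional eigenspace inside $span\{U\}$ (otherwise the claim would force multiplicity $\geq 2$), so $U$ is an eigenvector for the simple eigenvalue; by orthogonality of the eigenspaces of a symmetric operator, the multiplicity-$2$ eigenspace is then exactly $U^{\perp}=H_x(M^3)$, i.e.\ every holomorphic tangent vector is an eigenvector for the double eigenvalue.

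Finally, writing $AU=\beta U$ with $\beta$ the simple eigenvalue, I would use $[A,A_1]=0$ to get $A(A_1U)=A_1(AU)=\beta A_1U$, so $A_1U$ lies in the $\beta$-eigenspace $span\{U\}$; thus $A_1U\in span\{U\}$, and the same computation with $A_{1^*}$ gives $A_{1^*}U\in span\{U\}$, so $U$ is an eigenvector of both $A_1$ and $A_{1^*}$. I expect the only genuinely delicate step to be the multiplicity bookkeeping of the third paragraph — excluding three distinct eigenvalues and locating $U$ in the simple eigenspace — while the independence of $X$ and $FX$ and the commutation transfer are routine once (\ref{F2}) and Lemma 3.1 are available.
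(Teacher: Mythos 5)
Your proof is correct and follows essentially the same route as the paper: the key claim that an eigenvalue of $A$ with an eigenvector off $span\{U\}$ has multiplicity at least $2$ (via Lemma 3.1 and $[A_1,A_{1^*}]=2F$), the resulting two-eigenvalue structure with $U$ spanning the simple eigenspace, and the commutation argument transferring $U$ to an eigenvector of $A_1$ and $A_{1^*}$. The only difference is that you spell out in full the multiplicity bookkeeping that the paper compresses into ``by the same discussion as in the proof of Lemma 3.2.''
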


\begin{proof}
  From Lemma 3.2, we know that $p=3$. Choose orthonormal frame $\xi,\ \xi_1,\ \xi_{1^*}$ of $T^{\bot}(M^3)$. Since the normal connection is flat, from (\ref{3AAaAa0}) and (\ref{3AaAdab}), we have
  \begin{equation}\label{flatcom}
    [A,A_1]=0,\ [A,A_{1^*}]=0,\ [A_1,A_{1^*}]=2F.
  \end{equation}
  By the same discussion as in the proof of Lemma 3.2, we know that if $A$ has at least two distinct eigenvalues, then $A$ has two distinct eigenvalues and $U$ is an eigenvector corresponding to the simple eigenvalue, while all the holomorphic tangent vectors are eigenvectors corresponding to the eigenvalue with multiplicity 2. Assume that $AU=\mu U$. From Lemma 3.1 and (\ref{flatcom}), we have
  \begin{equation*}
    A(A_1U)=\mu A_1U,\ A(A_{1^*}U)=\mu A_{1^*}U.
  \end{equation*}
  Noting that $\mu$ is the simple eigenvalue of $A$ and $U$ is the corresponding eigenvector, it follows that there exist $\mu_1,\mu_{1^*}\in\mathbf{R}$, such that $A_1U=\mu_1U,\ A_{1^*}U=\mu_{1^*}U$, which imply that $U$ is also the eigenvector of $A_1$ and $A_{1^*}$.
\end{proof}

With the above three lemmas, we can prove Theorem 1.3.

\begin{proof}[Proof of Theorem 1.3]
  Lemma 3.2 shows that $p=3$. Now we prove $\xi$ is umbilical. Otherwise, from Lemma 3.3, we know that $A$ has two distinct eigenvalues, say $\lambda,\mu$. Assume $\mu$ is the simple one, then
  \begin{equation}\label{3AUmZlZ}
    AU=\mu U,\ AZ=\lambda Z,
  \end{equation}
  where $Z$ is any holomorphic tangent vector of $M^3$.

  Let $\zeta$ be the mean curvature vector, we decompose it as $\zeta=\zeta_1+\zeta_2$, where $\zeta_1\in span\{\xi\}, \zeta_2\bot\xi$. Choose the unit normal vector $\xi_1$ such that $\zeta_2=|\zeta_2|\xi_1$, then
  \begin{align*}
    \zeta = & |\zeta_1|\xi+|\zeta_2|\xi_1\\
    = & \frac{1}{3}(\tr A)\xi+\frac{1}{3}(\tr A_1)\xi_1+\frac{1}{3}(\tr A_{1^*})\xi_{1^*}.
  \end{align*}
  This implies that
  \begin{equation}\label{3traA10}
    \tr A=3|\zeta_1|,\ \tr A_1=3|\zeta_2|,\ \tr A_{1^*}=0.
  \end{equation}
  Combining (\ref{trAa*}) and (\ref{3traA10}), we see that
  \begin{equation}\label{3s1U3z2}
    s_1(U)=0,\ s_{1^*}(U)=3|\zeta_2|.
  \end{equation}
  Further, it follows from Lemma 3.3, (\ref{sa*}) and (\ref{sa}) that
  \begin{equation}\label{3A1UUU0}
    A_{1^*}U=\langle A_{1^*}U,U\rangle U=-s_1(U)U=0,
  \end{equation}
  \begin{equation}\label{3A1UZ2U}
    A_{1}U=\langle A_{1}U,U\rangle U=s_{1^*}(U)U=3|\zeta_2|U.
  \end{equation}
  Then for any $X\in T(M^3),X\bot U$, we have
  \begin{equation}\label{3s1XUX0}
    s_1(X)=-\langle A_{1^*}U,X\rangle=0,\ s_{1^*}(X)=\langle A_1U,X\rangle=0.
  \end{equation}

  Note that (\ref{3A1UUU0}) implies $0$ is an eigenvalue of $A_{1^*}$. According to Theorem 1.1, there must exist a non-zero eigenvalue of $A_{1^*}$, say $\alpha$. Then (\ref{3traA10}) shows that $-\alpha$ is also an eigenvalue of $A_{1^*}$. Assume that $X\in T(M^3), X\bot U, |X|=1$, and
  \begin{equation}\label{A1*X}
    A_{1^*}X=\alpha X.
  \end{equation}
  Write $Y=FX$, then
  \begin{equation}\label{A1*Y}
    A_{1^*}Y=-\alpha Y.
  \end{equation}
  From (\ref{Aa}),(\ref{3s1XUX0}),(\ref{A1*X}),and (\ref{A1*Y}), we have
  \begin{equation}\label{3A1XYAX}
    A_1X=-\alpha Y,\ A_1Y=-\alpha X.
  \end{equation}
  By a direct calculation, one can easily get
  \begin{equation}\label{A1A1*com}
    [A_1,A_{1^*}]X=-2\alpha^2 Y.
  \end{equation}
  On the other hand, it follows from (\ref{3AaAdab}) that
  \begin{equation}\label{A1A1*comX}
    [A_1,A_{1^*}]X=2FX=2Y.
  \end{equation}
  Comparing (\ref{A1A1*com}) and (\ref{A1A1*comX}), we get $\alpha^2=-1$. This is impossible, since the shape operator $A_{1*}$ is symmetric and its eigenvalues are all real numbers. This contradiction shows that $\xi$ is umbilical.

\end{proof}
%------------------------------------------------------------------------------------%

\section{\bf None Existence of a Class of CR Submanifolds of Maximal CR Dimension of \3 with Flat Normal Connection}
\vskip 0.4 true cm

In this section we prove the non-existence of 3-dimensional pseudo-umbilical CR submanifolds of maximal CR dimension of \3 with flat normal connection. Otherwise, let $M^3$ be such a submanifold. We first study the position of the mean curvature vector $\zeta$ in the normal bundle.

\begin{lem}
  Let $M^3$ be a 3-dimensional pseudo-umbilical CR submanifolds of maximal CR dimension of \3, $p>1$. If the normal connection is flat, then the mean curvature vector $\zeta$ is in the direction of $\xi$.
\end{lem}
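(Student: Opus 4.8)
The plan is to recognize this lemma as an immediate specialization of Theorem 1.1. By definition, $M^3$ being pseudo-umbilical means precisely that its mean curvature vector $\zeta$ is an umbilical normal vector; that is, the shape operator $A_{\zeta}$ equals $\langle\zeta,\zeta\rangle\,\mathrm{id}$ on $T_x(M^3)$ in the sense of \eqref{3AelxMn}. Thus $\zeta$ itself plays the role of the vector $\eta$ appearing in the hypothesis of Theorem 1.1.

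First I would verify that all the hypotheses of Theorem 1.1 are in force: the submanifold $M^3$ of $\mathbf{P}^{\frac{3+p}{2}}(\mathbf{C})$ is a CR submanifold of maximal CR dimension, the dimension condition $n=3>2$ holds, and the normal connection is flat by assumption. Since $\zeta$ is umbilical, every requirement of Theorem 1.1 with $\eta=\zeta$ is met.

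Then I would invoke Theorem 1.1 directly to conclude that $\zeta$ lies in the direction of the distinguished normal vector $\xi$, which is exactly the assertion of the lemma. The key point is that there is no independent obstacle to overcome here: the substantive work, namely controlling the position of an umbilical normal vector under a flat normal connection via the commutation relations \eqref{3AAaAa0} and \eqref{3AaAdab} coming from the Ricci equations, together with the full-rank property $\mathrm{rank}\,F=n-1$, was already carried out in the proof of Theorem 1.1. All that remains is the bookkeeping identification that the pseudo-umbilicity hypothesis says exactly that $\zeta$ is umbilical, after which the conclusion is immediate.
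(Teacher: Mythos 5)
Your proof is correct, and it is genuinely more direct than the paper's. You simply observe that pseudo-umbilicity is, by the paper's own definition (see the discussion around \eqref{3AelxMn}), exactly the statement that $\zeta$ is an umbilical normal vector, and then apply Theorem 1.1 with $\eta=\zeta$ (all hypotheses hold: maximal CR dimension, $n=3>2$, flat normal connection; the degenerate case $\zeta=0$ is harmless since $0\in span\{\xi\}$). The paper instead argues by contradiction via the decomposition $\zeta=\zeta_1+\zeta_2$ with $\zeta_1\in span\{\xi\}$, $\zeta_2\perp\xi$: if $\zeta_2\neq 0$, then $\zeta_2$ cannot be umbilical by Theorem 1.1, while $\zeta_1$ \emph{is} umbilical by Theorem 1.3 (since $\xi$ is umbilical when $p>1$ and the normal connection is flat), and hence $\zeta=\zeta_1+\zeta_2$ cannot be umbilical---contradicting pseudo-umbilicity. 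That argument, attributed in Remark 4.1 to B.Y. Chen's technique for locating umbilical directions, silently uses the linearity fact that a difference of umbilical normal vectors is umbilical, and it makes the lemma depend on Theorem 1.3 and therefore on the dimension-3 setting. Your route dispenses with Theorem 1.3 entirely, so it is shorter, has fewer logical dependencies, and actually establishes the stronger statement that for any $n>2$ a pseudo-umbilical CR submanifold of maximal CR dimension with flat normal connection has mean curvature vector in the direction of $\xi$; what the paper's longer route buys is chiefly an illustration of the decomposition method, not anything needed for this conclusion.
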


\begin{proof}
  We decompose $\zeta$ as $\zeta=\zeta_1+\zeta_2$, where $\zeta_1\in span\{\xi\},\ \zeta_2\bot\xi$. We need to prove $\zeta_2=0$. Otherwise, $\zeta_2\not=0$. From Theorem 1.1 we see that $\zeta_2$ is not umbilical. From Theorem 1.3 we know that $\zeta_1$ is umbilical. Hence $\zeta$ is not umbilical. This contradicts our assumption that $M^3$ is pseudo-umbilical. Therefore, $\zeta_2=0$, i.e.,$\zeta\in span\{\xi\}$.
\end{proof}

\begin{remark}
  The method we used in the proof of Lemma 4.1 is due to B.Y.Chen who, in \cite{BYCGL}, studied the umbilical normal vectors of submanifolds of a submanifold.
\end{remark}

From Theorem 1.3, $p=3$. So
\begin{equation*}
   \zeta = \frac{1}{3}(\tr A)\xi+\frac{1}{3}(\tr A_1)\xi_1+\frac{1}{3}(\tr A_{1^*})\xi_{1^*}.
\end{equation*}
Combined this with Lemma 4.1, we have
\begin{equation}\label{4traA10}
  \tr A=3|\zeta|,\ \tr A_1=0,\ \tr A_{1^*}=0.
\end{equation}
Further, it follows from (\ref{trAa*}) that
\begin{equation}\label{4s1U1U0}
    s_1(U)=0,\ s_{1^*}(U)=0.
\end{equation}

\begin{lem}
  Let $M^3$ be a 3-dimensional pseudo-umbilical CR submanifolds of maximal CR dimension of $\mathbf{P}^{\frac{3+p}{2}}(\mathbf{C})$, $p>1$. If the normal connection is flat, then $A_1U$ is a non-zero holomorphic tangent vector of $M^3$.
\end{lem}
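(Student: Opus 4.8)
The plan is to establish the two halves of the statement separately: first that $A_1U$ is orthogonal to $U$ (hence holomorphic), and then, by far the harder part, that $A_1U$ does not vanish. The holomorphicity is immediate. A tangent vector is holomorphic precisely when it is orthogonal to $U$, so it suffices to check $\langle A_1U,U\rangle=0$. By (\ref{sa*}) we have $\langle A_1U,U\rangle=s_{1^*}(U)$, and $s_{1^*}(U)=0$ by (\ref{4s1U1U0}). Thus $A_1U\perp U$, i.e. $A_1U$ is a holomorphic tangent vector.

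For nonvanishing I would argue by contradiction, assuming $A_1U=0$ and squeezing out all the normal-connection coefficients. From (\ref{sa*}), $s_{1^*}(X)=\langle A_1U,X\rangle=0$ for every $X$, so $s_{1^*}\equiv 0$. Applying (\ref{Aa*}) to $U$ and using both $A_1U=0$ and $s_1(U)=0$ (from (\ref{4s1U1U0})) gives $A_{1^*}U=FA_1U-s_1(U)U=0$. Feeding this into (\ref{sa}) yields $s_1(X)=-\langle A_{1^*}U,X\rangle=0$ for every $X$, so $s_1\equiv 0$ as well. At this point both $A_1$ and $A_{1^*}$ annihilate $U$ and preserve the holomorphic space.

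The decisive step is the Codazzi equation. By Theorem 1.3 the distinguished normal $\xi$ is umbilical, so $A=\lambda\,\mathrm{id}$ for a function $\lambda$; then $(\nabla_XA)Y=(X\lambda)Y$ and the left side of (\ref{equationCodazziA}) becomes $(X\lambda)Y-(Y\lambda)X$. Since $s_1\equiv s_{1^*}\equiv 0$, every summation term on the right of (\ref{equationCodazziA}) disappears, leaving
\begin{equation*}
(X\lambda)Y-(Y\lambda)X=u(X)FY-u(Y)FX-2\langle FX,Y\rangle U.
\end{equation*}
Evaluating at $X=U$ and $Y=Z$ with $Z\neq 0$ holomorphic, and using $u(U)=1$, $u(Z)=0$, $FU=0$ (from (\ref{FU})), the identity collapses to $(U\lambda)Z-(Z\lambda)U=FZ$. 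Comparing the $U$-component gives $Z\lambda=0$, while the holomorphic component gives $(U\lambda)Z=FZ$. But $F^2Z=-Z$ on the holomorphic space by (\ref{F2})--(\ref{FU}), so $F$ restricts there to an invertible skew operator; hence $FZ\neq 0$ and $FZ\perp Z$, and it cannot be a scalar multiple of $Z$. This contradiction forces $A_1U\neq 0$.

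The main obstacle is that the flatness relation $[A_1,A_{1^*}]=2F$ from (\ref{3AaAdab}) is, on its own, perfectly consistent with $A_1U=0$: two trace-free symmetric operators on the two-dimensional holomorphic space can be arranged so that their commutator equals $2F$ there, so the Ricci/flatness data alone yield no contradiction. The genuine leverage comes from combining the Codazzi equation with the umbilicity $A=\lambda\,\mathrm{id}$ provided by Theorem 1.3; the vanishing of all the connection coefficients $s_1,s_{1^*}$ forced by $A_1U=0$ is exactly what reduces Codazzi to the geometrically impossible identity above. Organizing the bookkeeping so that these vanishings are extracted cleanly before invoking Codazzi is the part that requires care.
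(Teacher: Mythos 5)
Your proof is correct, and for the crucial nonvanishing half it takes a genuinely different route from the paper. Both proofs begin identically: from $A_1U=0$ you extract $A_{1^*}U=0$ via (\ref{Aa*}) and then $s_1\equiv s_{1^*}\equiv 0$ via (\ref{sa*})--(\ref{sa}) (your one-line check of holomorphicity via $\langle A_1U,U\rangle=s_{1^*}(U)=0$ is a harmless variant of the paper's $A_1U=-FA_{1^*}U$ plus $u\circ F=0$). But where you then invoke the Codazzi equation (\ref{equationCodazziA}) together with the umbilicity $A=\lambda\,\mathrm{id}$ from Theorem 1.3 --- reducing it to $(U\lambda)Z-(Z\lambda)U=FZ$ and killing it by skew-symmetry and $F^2Z=-Z$ --- the paper stays entirely pointwise: since $\zeta\parallel\xi$ forces $\tr A_{1^*}=0$, and Theorem 1.1 forbids $A_{1^*}=0$, the operator $A_{1^*}$ has eigenvalues $\alpha,-\alpha,0$ with $\alpha\neq 0$; the relations (\ref{Aa*})--(\ref{Aa}) then give $A_1X=-\alpha FX$, $A_1FX=-\alpha X$, and the flatness relation $[A_1,A_{1^*}]=2F$ forces $\alpha^2=-1$, reusing verbatim the endgame of the proof of Theorem 1.3. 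Your argument buys a first-order derivation needing no eigenvalue analysis (and incidentally shows $\lambda$ would be locally constant); the paper's buys a purely algebraic, derivative-free contradiction. One caveat about your closing remark: you are right that the commutator identity $[A_1,A_{1^*}]=2F$ alone is consistent with $A_1U=0$, but your conclusion that ``the genuine leverage comes from Codazzi'' overstates the case --- the paper shows that flatness combined with the structural relations (\ref{Aa*})--(\ref{Aa}) (which encode $\overline{\nabla}J=0$, and which your counting of free trace-free operators ignores) already suffices, with no appeal to Codazzi at all.
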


\begin{proof}
  From (\ref{Aa}) and (\ref{4s1U1U0}),
  \begin{equation*}
    A_1U=-FA_{1^*}U.
  \end{equation*}
  Then the first formula of (\ref{FU}) implies that $A_1U$ is orthogonal to $U$, which shows that $A_1U$ is a holomorphic tangent vector. In the following, we prove $A_1U\not=0$. Otherwise, $A_1U=0$. Combining (\ref{Aa*}) and (\ref{4s1U1U0}), we also have
  \begin{equation*}
    A_{1^*}U=0.
  \end{equation*}
  Then (\ref{sa*}) and (\ref{sa}) give that
  \begin{equation}\label{4s10s10}
    s_1=0,\ s_{1^*}=0.
  \end{equation}
  From (\ref{4traA10}) and Theorem 1.1, we see that $A_{1^*}$ has non-zero eigenvalues $\alpha$ and $-\alpha$. By the same discussion as in the latter part of the proof of Theorem 1.3, one can deduce that $\alpha^2=-1$ which contradicts the fact that $\alpha$ is a real number. So $A_1U\not=0$. This completes the proof.
\end{proof}

Now write
\begin{equation}\label{4XA1YFX}
    X=A_1U,\ Y=FX.
\end{equation}
From (\ref{4s1U1U0}) and (\ref{Aa*}), it is easy to see that
\begin{equation}\label{Y}
   Y=FX=FA_1U=A_{1^*}U.
\end{equation}
Note that $\{X,Y,U\}$ are orthogonal to each other.

\begin{lem}
  With respect to the frame $\{X,Y,U\}$ chosen above, we have
  \begin{equation*}
    |X|^2=|Y|^2=1,
  \end{equation*}
  \begin{equation*}
    s_1(X)=0,\ s_1(Y)=-1,\ s_{1^*}(X)=1, s_{1^*}(Y)=0,
  \end{equation*}
  and the mean curvature $|\zeta|=constant$.
\end{lem}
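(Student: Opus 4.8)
The plan is to read off the four values of $s_1$ and $s_{1^*}$ almost directly from the structure equations, and then to obtain both the normalization $|X|^2=|Y|^2=1$ and the constancy of $|\zeta|$ from a single application of the Codazzi equation, exploiting the fact that $\xi$ is umbilical.

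First I would record the one-form identities. Since $X=A_1U$ and $Y=A_{1^*}U$ by \eqref{4XA1YFX} and \eqref{Y}, equations \eqref{sa*} and \eqref{sa} say precisely that $s_{1^*}(\cdot)=\langle X,\cdot\rangle$ and $s_1(\cdot)=-\langle Y,\cdot\rangle$. As $\{X,Y,U\}$ are mutually orthogonal, this instantly gives $s_1(X)=-\langle Y,X\rangle=0$ and $s_{1^*}(Y)=\langle X,Y\rangle=0$, while $s_{1^*}(X)=|X|^2$ and $s_1(Y)=-|Y|^2$. Moreover $Y=FX$ with $X$ orthogonal to $U$ (Lemma 4.3), so $u(X)=0$ and $JX=FX$ by \eqref{defFu}; since $J$ is an ambient isometry, $|Y|=|FX|=|X|$. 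Hence it suffices to prove $|X|^2=1$, after which the two remaining $s$-values read $s_{1^*}(X)=1$ and $s_1(Y)=-1$.

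The core of the argument is the Codazzi equation \eqref{equationCodazziA}. By Theorem 1.3 the vector $\xi$ is umbilical, and by Lemma 4.1 the mean curvature lies along $\xi$, so $A=\lambda\,\mathrm{id}$ with $\lambda=|\zeta|$; for such an $A$ the left side collapses to $(\nabla_VA)W-(\nabla_WA)V=(V\lambda)W-(W\lambda)V$. I would substitute $V=U,\ W=X$. Using $u(U)=1$, $u(X)=0$, $FU=0$ from \eqref{FU}, the values $s_1(U)=s_{1^*}(U)=0$ from \eqref{4s1U1U0}, the already computed $s_1(X)=0$ and $s_{1^*}(X)=|X|^2$, together with $A_{1^*}U=Y$, the right side reduces to $FX-|X|^2A_{1^*}U=(1-|X|^2)Y$. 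Since the left side lies in $\mathrm{span}\{X,U\}$, which is orthogonal to $Y$, comparing $Y$-components forces $|X|^2=1$, and comparing the $X$- and $U$-components forces $X\lambda=0$ and $U\lambda=0$.

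Finally, to close out the constancy of $|\zeta|=\lambda$, I would run the same computation with $V=U,\ W=Y$. Here $FY=F^2X=-X$ since $X\perp U$, and with $s_1(Y)=-|Y|^2=-1$ the right side becomes $(|X|^2-1)X=0$; thus the left side $-(Y\lambda)U$ vanishes and $Y\lambda=0$. Combined with $U\lambda=X\lambda=0$ this gives $\nabla\lambda=0$, so $|\zeta|$ is constant. The main obstacle is the bookkeeping inside the Codazzi computation: one must correctly evaluate every $s$-coefficient and use \eqref{Aa*}, \eqref{Aa} so that the right side organizes into a single scalar multiple of $Y$ (respectively $X$). Once this is done, the normalization and the constancy of the mean curvature both fall out of the orthogonality of the frame $\{X,Y,U\}$.
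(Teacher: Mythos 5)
Your proposal is correct and takes essentially the same route as the paper: both read the values of $s_1,s_{1^*}$ off (\ref{sa*}) and (\ref{sa}), then apply the Codazzi equation (\ref{equationCodazziA}) with $A=|\zeta|\,\mathrm{id}$ (justified via Lemma 4.1 together with umbilicity) to the pairs $(U,X)$ and $(U,Y)$, comparing components in the orthogonal frame $\{X,Y,U\}$. Your only deviations are cosmetic --- you obtain $|Y|=|X|$ from the isometry of $J$ rather than from the second Codazzi computation, and the display ``$(|X|^2-1)X=0$'' should read ``$(|Y|^2-1)X=0$'', which is harmless since $|X|=|Y|$ at that point.
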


\begin{proof}
  From (\ref{sa*}), (\ref{sa}), (\ref{4XA1YFX}) and (\ref{Y}), we have
  \begin{equation}\label{4s1XYX0}
   s_1(X)=-\langle A_{1^*}U,X\rangle=-\langle Y,X\rangle=0,
  \end{equation}
  \begin{equation}\label{4s1YXY0}
   s_{1^*}(Y)=\langle A_{1}U,Y\rangle=\langle X,Y\rangle=0,
  \end{equation}
  \begin{equation}\label{4s1YYY2}
   s_1(Y)=-\langle A_{1^*}U,Y\rangle=-|Y|^2,
  \end{equation}
  \begin{equation}\label{4s1XXX2}
   s_{1^*}(X)=\langle A_{1}U,X\rangle=|X|^2.
  \end{equation}
Applying (\ref{4s1U1U0}),(\ref{Y}) and (\ref{4s1XYX0}) to the equation of Codazzi (\ref{equationCodazziA}), we get
\begin{equation}\label{4XAU1XY}
    (\l_XA)U-(\l_UA)X=-Y+s_{1^*}(X)Y.
\end{equation}
On the other hand, we calculate
\begin{align}\label{4XAUUzX}
      & (\l_XA)U-(\l_UA)X \notag\\
    = & \l_X(|\zeta|U)-A(\l_XU)-\l_U(|\zeta|X)+A(\l_UX)\notag\\
    = & (X|\zeta|)U-(U|\zeta|)X.
\end{align}
In the above calculation we use the fact that $AZ=|\zeta|Z$ for any tangent vector $Z$ of $M^3$, which can be deduced from the pseudo-umbilicity of $M^3$ and Lemma 4.1. Combining (\ref{4XAU1XY}) and (\ref{4XAUUzX}), we get
\begin{equation*}
    (X|\zeta|)U-(U|\zeta|)X+(1-s_{1^*}(X))Y=0.
\end{equation*}
So
\begin{equation}\label{4Xz01X1}
   X|\zeta|=0,\ U|\zeta|=0,\ s_{1^*}(X)=1.
\end{equation}
Similarly, applying the equation of Codazzi (\ref{equationCodazziA}) to $(\l_YA)U-(\l_UA)Y$, we get
\begin{equation}\label{4Yz01Y1}
  Y|\zeta|=0,\ U|\zeta|=0,\ s_{1}(Y)=-1.
\end{equation}
From (\ref{4s1YYY2}), (\ref{4s1XXX2}), (\ref{4Xz01X1}), and (\ref{4Yz01Y1}), we know that
\begin{equation*}
    |X|^2=1,\ |Y|^2=1,\ |\zeta|=constant.
\end{equation*}
\end{proof}

%\begin{rem}
%  From Lemma 4.3 we see that with respect to the frame field $\xi,\xi_1,\xi_{1^*}$ chosen above, the third fundamental form of $M^3$ can not vanish.
%\end{rem}

\begin{lem}
  For the holomorphic tangent vector $X,Y$ defined by (\ref{4XA1YFX}) and (\ref{Y}), we have
  \begin{equation*}
    A_1X=U,\ A_1Y=0,\ A_{1^*}X=0,\ A_{1^*}Y=U.
  \end{equation*}
  \end{lem}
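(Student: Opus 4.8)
The plan is to determine the full matrices of $A_1$ and $A_{1^*}$ in the orthonormal frame $\{X,Y,U\}$ supplied by Lemma 4.3, where $X=A_1U$ and $Y=FX=A_{1^*}U$ by \eqref{4XA1YFX} and \eqref{Y}. First I would record how $F$ acts on this frame: from $Y=FX$ and \eqref{F2} (together with $u(X)=\langle X,U\rangle=0$, as $X$ is holomorphic) one has $FY=F^2X=-X$, while $FU=0$ by \eqref{FU}. Since $A_1U=X$ and $A_{1^*}U=Y$ are already known, the whole content of the lemma is to pin down $A_1X$ and $A_1Y$, after which $A_{1^*}X$ and $A_{1^*}Y$ follow from the structure equations.

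Next I would use \eqref{Aa} and \eqref{Aa*} with the scalars $s_1(X)=0,\ s_1(Y)=-1,\ s_{1^*}(X)=1,\ s_{1^*}(Y)=0$ from Lemma 4.3 to express $A_{1^*}$ through $A_1$: equation \eqref{Aa*} gives $A_{1^*}X=FA_1X$ and $A_{1^*}Y=FA_1Y+U$, while equation \eqref{Aa} gives $A_1X=-FA_{1^*}X+U$ and $A_1Y=-FA_{1^*}Y$. Substituting the former pair into the latter and simplifying with \eqref{F2} isolates the $U$-components, namely $\langle A_1X,U\rangle=1$ and $\langle A_1Y,U\rangle=0$. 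Writing $A_1X=aX+bY+U$ and $A_1Y=cX+dY$ and applying $FX=Y,\ FY=-X$ then yields $A_{1^*}X=-bX+aY$ and $A_{1^*}Y=-dX+cY+U$, so that the four operators are known up to the scalars $a,b,c,d$.

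Finally I would close the system. Symmetry of the shape operator $A_1$ forces $c=b$, and symmetry of $A_{1^*}$ forces $a=-d$. It then suffices to evaluate the Ricci relation \eqref{3AaAdab} on $U$: since $FU=0$, it reads $[A_1,A_{1^*}]U=0$, that is $A_1Y=A_{1^*}X$, and comparing the $X$- and $Y$-components gives $b=0$ and $a=0$, hence also $c=d=0$. This collapses the matrices to precisely $A_1X=U,\ A_1Y=0,\ A_{1^*}X=0,\ A_{1^*}Y=U$.

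I expect the delicate point to be logical rather than computational: one must use only the $U$-component of \eqref{3AaAdab} here, since $FU=0$ makes it the consistent identity $0=0$. Evaluating \eqref{3AaAdab} on $X$ instead would compare $[A_1,A_{1^*}]X=-Y$ with $2FX=2Y$, and this very inconsistency is exactly the contradiction to be exploited afterwards in the proof of Theorem 1.4; invoking it prematurely would short-circuit the argument, so it must be kept out of the present lemma.
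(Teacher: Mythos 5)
Your proof is correct, and it closes the argument by a genuinely different route than the paper. Both proofs start identically: you use \eqref{Aa*} and \eqref{Aa} with the values $s_1(X)=0,\ s_1(Y)=-1,\ s_{1^*}(X)=1,\ s_{1^*}(Y)=0$ from Lemma 4.3 to get the four relations $A_{1^*}X=FA_1X$, $A_{1^*}Y=FA_1Y+U$, $A_1X=-FA_{1^*}X+U$, $A_1Y=-FA_{1^*}Y$, and both proofs extract from \eqref{3AaAdab} only its $U$-component, $[A_1,A_{1^*}]U=0$, i.e. $A_1Y=A_{1^*}X$. The difference is in how the remaining entries are killed. The paper leans on pseudo-umbilicity a second time: it invokes the trace conditions $\tr A_1=\tr A_{1^*}=0$ from \eqref{4traA10} and combines them with a chain of inner-product identities (skew-symmetry of $F$, symmetry of $A_1$) to show all the diagonal and off-diagonal entries vanish. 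You instead write the coefficient ansatz $A_1X=aX+bY+U$, $A_1Y=cX+dY$, push it through $F$ to get $A_{1^*}X=-bX+aY$, $A_{1^*}Y=-dX+cY+U$, and then close the system using only the symmetry of the two shape operators ($c=b$, $a=-d$) against the commutator relation on $U$ ($c=-b$, $d=a$), forcing $a=b=c=d=0$. Your version is slightly more economical and has the interesting byproduct of showing that the trace conditions \eqref{4traA10} are not actually needed for this lemma; the paper's version stays closer to the quantities already computed in Lemma 4.3 and avoids introducing coordinates. Your closing remark about using only the $U$-component of \eqref{3AaAdab} is also well taken: evaluating the operator identity on holomorphic vectors is precisely the inconsistency reserved for the proof of Theorem 1.4, and both you and the paper correctly quarantine it from this lemma.
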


\begin{proof}
  From (\ref{Aa*}), (\ref{Aa}) and Lemma 4.3, we have
  \begin{equation}\label{4A1X1XU}
    A_1X=-FA_{1^*}X+U,
  \end{equation}
  \begin{equation}\label{4A1YA1Y}
    A_1Y=-FA_{1^*}Y,
  \end{equation}
  \begin{equation}\label{4A1XA1X}
    A_{1^*}X=FA_{1}X,
  \end{equation}
  \begin{equation}\label{4A1Y1YU}
    A_{1^*}Y=FA_{1}Y+U.
  \end{equation}
  From (\ref{3AaAdab}), we get
  \begin{equation*}
    [A_1,A_{1^*}]U=0.
  \end{equation*}
  Substituting (\ref{4XA1YFX}) and (\ref{Y}) into the above formula, we have
  \begin{equation}\label{4A1YA1X}
    A_1Y=A_{1^*}X.
  \end{equation}
  From (\ref{4A1X1XU}), (\ref{4A1YA1X}) and the skew-symmetry of $F$, we calculate
  \begin{equation}\label{4A1X1YY}
    \langle A_1X,X\rangle=-\langle FA_{1^*}X,X\rangle=\langle A_{1^*}X,Y\rangle=\langle A_1Y,Y\rangle.
  \end{equation}
  From (\ref{4traA10}), (\ref{4s1U1U0}) and (\ref{sa*}), we calculate
  \begin{align}\label{40tr1YY1}
    0 = & \tr A_1=\langle A_1X,X\rangle+\langle A_1Y,Y\rangle+\langle A_1U,U\rangle\notag\\
    = & \langle A_1X,X\rangle+\langle A_1Y,Y\rangle+s_{1^*}(U)\notag\\
    = & \langle A_1X,X\rangle+\langle A_1Y,Y\rangle.
  \end{align}
  Combining (\ref{4A1X1YY}) and (\ref{40tr1YY1}), we know that
  \begin{equation}\label{4A1XYY0}
    \langle A_1X,X\rangle=0,\  \langle A_1Y,Y\rangle=0.
  \end{equation}
  From (\ref{4A1XA1X}) and the skew-symmetry of $F$, we calculate
  \begin{equation}\label{4A1X1XY}
    \langle A_{1^*}X,X\rangle=\langle FA_1X,X\rangle=-\langle A_1X,Y\rangle.
  \end{equation}
  On the other hand, from (\ref{4A1YA1X}),
  \begin{equation}\label{4A1X1XY2}
    \langle A_{1^*}X,X\rangle=\langle A_1Y,X\rangle=\langle A_1X,Y\rangle.
  \end{equation}
  Combining (\ref{4A1X1XY}) and (\ref{4A1X1XY2}), we have
  \begin{equation}\label{4A1XXY0}
    \langle A_{1^*}X,X\rangle=0,\  \langle A_{1}X,Y\rangle=0.
  \end{equation}
  From (\ref{4traA10}), (\ref{4s1U1U0}), (\ref{4A1XXY0}) and (\ref{sa}), we calculate
   \begin{align}\label{40tr1YY}
    0 = & \tr A_{1^*}=\langle A_{1^*}X,X\rangle+\langle A_{1^*}Y,Y\rangle+\langle A_{1^*}U,U\rangle\notag\\
    = & \langle A_{1^*}X,X\rangle+\langle A_{1^*}Y,Y\rangle-s_{1}(U)\notag\\
    = & \langle A_{1^*}Y,Y\rangle.
  \end{align}
  From (\ref{4A1YA1X}) and (\ref{4A1XYY0}), we have
  \begin{equation}\label{4A1YYY0}
    \langle A_{1^*}Y,X\rangle=\langle A_{1^*}X,Y\rangle=\langle A_1Y,Y\rangle=0.
  \end{equation}
  Noting that $\{X,Y,U\}$ are orthonormal, by using (\ref{4A1XYY0}), (\ref{4A1XXY0}) and Lemma 4.3, we get
  \begin{align*}
    A_1X = & \langle A_{1}X,X\rangle X+\langle A_{1}X,Y\rangle Y+\langle A_{1}X,U\rangle U\notag\\
    = & s_{1^*}(X)U=U.
  \end{align*}
  Similarly, by using (\ref{sa*}), (\ref{sa}), (\ref{4A1XYY0}), (\ref{4A1XXY0}), (\ref{40tr1YY}), (\ref{4A1YYY0}) and Lemma 4.3, we also have
  \begin{equation*}
    A_1Y=0,\ A_{1^*}X=0,\ A_{1^*}Y=U.
  \end{equation*}
\end{proof}

Now we can prove Theorem 1.4.

\begin{proof}[Proof of Theorem 1.4]
  Since the co-dimension $p>1$, it follows from Theorem 1.3 that $p=3$. Let $M^3$ be a pseudo-umbilical CR submanifold of maximal CRdimension of \3 whose normal connection is flat. Let $X,Y$ be the holomorphic tangent vectors defined by (\ref{4XA1YFX}) and (\ref{Y}). From (\ref{3AaAdab}), we have
  \begin{equation}\label{4A1AX2Y}
    [A_1,A_{1^*}]X=2Y.
  \end{equation}
  On the other hand, it follows from Lemma 4.4 and (\ref{Y}) that
  \begin{equation}\label{4A1A1UY}
    [A_1,A_{1^*}]X=A_1A_{1^*}X-A_{1^*}A_1X=-A_{1^*}U=-Y.
  \end{equation}
  This is a contradiction which proves the non-existence of such submanifolds. This completes the proof.
\end{proof}

%------------------------------------------------------------------------------------%

\vskip 0.5 true cm

%{\it Acknowledgements}\ \ This work is supported by the Foundation for Excellent Young Talents of Higher Education of Anhui Province (Grant No.\,2011SQRL021ZD).

%-----------------------------------------------------------------------------
%-----------------------------------------------------------------------------

\bigskip
\bigskip

\end{document}